\newtheorem{theorem}{Theorem}
\newtheorem{lemma}[theorem]{Lemma}
\theoremstyle{remark}
\renewcommand{\phi}{\varphi}
\renewcommand{\epsilon}{\varepsilon}
\begin{document}
\title{On the behavior of integrable functions at infinity}
\author{Andrzej Komisarski}
\address{Andrzej Komisarski\\
Department of Probability Theory and Statistics\\Faculty of Mathematics and Computer Science\\
University of \L\'od\'z\\ul.Banacha 22\\90-238 \L\'od\'z\\Poland}
\email{andkom@math.uni.lodz.pl}
\subjclass[2010]{Primary 26B15; Secondary 40A05, 26A42, 28A25}


\keywords{Lebesgue integral, behavior of a~function at infinity, convergence of series, convergence of sequences}
\begin{abstract}
We investigate the behavior of sequences $(f(c_nx))$ for Lebesgue integrable functions $f:\mathbb R^d\to\mathbb R$.
In particular, we give a~description of classes of multipliers $(c_n)$ and $(d_n)$ such that $f(c_nx)\to0$ or $\sum_{n=1}^\infty|f(d_nx)|<\infty$
for $\lambda$ almost every $x\in\mathbb R^d$.
\end{abstract}

\maketitle

It is well known that if a~series $\sum_{n=1}^\infty a_n$ is convergent, then $a_n\to0$.
It may seem surprising that a~similar result does not hold for integrals. Namely, if $f:\mathbb R\to\mathbb R$ is Lebesgue integrable,
then it is not necessary that $\lim_{x\to\infty}f(x)=0$. Various authors investigated
the behavior of integrable functions at infinity, see e.g. \cite{Lesigne,Mihai,NicPopi,NicPopii,NicPopiii}.

E. Lesigne showed in \cite{Lesigne} that if $f:\mathbb R\to\mathbb R$ is Lebesgue integrable, then for $\lambda$ almost every
$x\in\mathbb R$ one has $f(nx)\to0$. In this paper we generalize Lesigne's investigations in several directions.
One way is to replace the domain of $f$ by the space $\mathbb R^d$ equipped with $d$-dimensional Lebesgue measure $\lambda$.
On the other hand, we want to describe a~possibly large class of multipliers $c_n$ which may be substituted for $n$ in Lesigne's result.
As the first result going in this direction we present the following theorem:

\begin{theorem}\label{szpoz}
Let $d\in\mathbb N$ and let $(c_n)$ be a~sequence of positive numbers such that for some permutation $(c'_n)$ of $(c_n)$
the sequence $(\sqrt[d]n/c_n')$ is bounded.
Assume that $f:\mathbb R^d\to\mathbb R$ is Lebesgue integrable ($\int|f(x)|dx<\infty$). Then for $\lambda$ almost every
$x\in\mathbb R^d$ one has $\sum_{n=1}^\infty|f(c_nx)|<\infty$ (hence $f(c_nx)\to0$).
\end{theorem}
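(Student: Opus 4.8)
\textit{Plan of proof.} The plan is to deduce the a.e.\ convergence of $\sum_n|f(c_nx)|$ from the integrability of $x\mapsto\sum_n|f(c_nx)|$ over every spherical annulus that stays away from the origin, and to obtain the latter by a Tonelli computation combined with a counting bound on the multipliers.

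First I would observe that the conclusion is invariant under permutations of $(c_n)$, since convergence of a series of nonnegative terms does not depend on their order; hence I may assume outright that $(\sqrt[d]n/c_n)$ is bounded, and fix $M\ge1$ with $c_n\ge\sqrt[d]n/M$ for all $n$. The only consequence of this hypothesis that I will use is the counting estimate $\#\{n:c_n<T\}\le M^dT^d$ for every $T>0$ (because $c_n<T$ forces $n<M^dT^d$, so at most $M^dT^d$ indices qualify).

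Next, fix $0<a<b$, put $A=\{x:a\le|x|\le b\}$, and set $g_m=\int_{m\le|y|<m+1}|f(y)|\,dy$, so that $\sum_{m\ge0}g_m=\int_{\mathbb R^d}|f|<\infty$. By Tonelli and the change of variables $y=c_nx$,
\[
\int_A\sum_{n=1}^\infty|f(c_nx)|\,dx=\sum_{n=1}^\infty c_n^{-d}\int_{ac_n\le|y|\le bc_n}|f(y)|\,dy\le\sum_{n=1}^\infty c_n^{-d}\sum_{m\in J_n}g_m,
\]
where $J_n=\{m\ge0:[m,m+1)\cap[ac_n,bc_n]\neq\emptyset\}$. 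I would then interchange the two summations, obtaining $\sum_{m\ge0}g_m\sum_{n:m\in J_n}c_n^{-d}$, and estimate the inner sum separately in $m$. For $m\ge1$, membership $m\in J_n$ forces both $c_n\ge m/b$ (so $c_n^{-d}\le(b/m)^d$) and $c_n<(m+1)/a$ (so by the counting estimate there are at most $M^d((m+1)/a)^d\le(2M/a)^dm^d$ such $n$); multiplying these gives $\sum_{n:m\in J_n}c_n^{-d}\le(2bM/a)^d$, a bound independent of $m$. For $m=0$, only the finitely many indices with $c_n<1/a$ occur (as $c_n\to\infty$), so $\sum_{n:0\in J_n}c_n^{-d}$ is some finite constant $C_a$. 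Hence $\int_A\sum_n|f(c_nx)|\,dx\le\bigl(C_a+(2bM/a)^d\bigr)\int_{\mathbb R^d}|f|<\infty$, and therefore $\sum_n|f(c_nx)|<\infty$ for a.e.\ $x\in A$. Applying this with $a=2^k$, $b=2^{k+1}$ for every $k\in\mathbb Z$ and noting that these annuli cover $\mathbb R^d\setminus\{0\}$ yields the conclusion for a.e.\ $x\in\mathbb R^d$; the parenthetical $f(c_nx)\to0$ then follows because the terms of a convergent series tend to $0$.

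The step I expect to be the crux is the interchange of summations together with the counting bound. The naive route---estimating $\int_A|f(c_nx)|\,dx\le c_n^{-d}\int|f|$ and summing---fails, since $\sum_nc_n^{-d}$ may diverge like $\sum1/n$; what rescues the argument is that integrating over an annulus pushes the relevant masses $\{ac_n\le|y|\le bc_n\}$ out to infinity as $n\to\infty$, and the hypothesis, in the form $\#\{n:c_n<T\}\lesssim T^d$, is exactly what forces $\sum_{n:m\in J_n}c_n^{-d}$ to be bounded uniformly in $m$ after the interchange.
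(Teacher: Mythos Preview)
Your proof is correct and follows the same strategy as the paper: show $\int_A\sum_n|f(c_nx)|\,dx<\infty$ on annuli $A$ away from the origin via Tonelli and the change of variables $y=c_nx$, using the hypothesis to bound sums $\sum c_n^{-d}$ over the relevant index sets, then cover $\mathbb R^d\setminus\{0\}$ by dyadic annuli. The only difference is packaging: the paper first isolates the estimate $\sum_{t\le c_n<2t}c_n^{-d}\le M$ (for all $t>0$) as a lemma, and then, working on the single annulus $\{\tfrac12<\|x\|\le1\}$, obtains the pointwise bound $\sum_n c_n^{-d}|f(y)|\mathbf 1_{c_n/2<\|y\|\le c_n}\le M|f(y)|$ directly, which replaces your integer-shell decomposition and the separate treatment of $m=0$.
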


A little comment is necessary to explain the assumption on the sequence $(c_n)$. Theorem \ref{szpoz} would be valid
if we just assumed that $(\sqrt[d]n/c_n)$ is bounded. However, the conclusion of the theorem is permutation invariant,
i.e., if it holds for a~sequence $(c_n)$, then it also holds for any permutation of $(c_n)$.
If any form of the reversal of Theorem \ref{szpoz} should hold true, then its assumptions have to be permutation invariant as well.
Unfortunately, the condition ``$(\sqrt[d]n/c_n)$ is bounded'' is not permutation invariant. For this reason
an~additional sequence $(c'_n)$ (being a~permutation of $(c_n)$) has to be explicitly introduced.

We note that in Theorem \ref{szpoz} we obtain more than we intended. Namely, we get $\sum_{n=1}^\infty|f(c_nx)|<\infty$
instead of $f(c_nx)\to0$. If one wishes to conclude that $f(c_nx)\to0$, then weaker assumptions
on the function $f$ are needed:

\begin{theorem}\label{granpoz}
Let $d\in\mathbb N$ and let $(c_n)$ be a~sequence of positive numbers such that for some permutation $(c'_n)$ of $(c_n)$
the sequence $(\sqrt[d]n/c_n')$ is bounded.
Moreover, let $f:\mathbb R^d\to\mathbb R$ be measurable and such that
for every $\epsilon>0$ one has $\lambda(\{x\in\mathbb R^d:|f(x)|\geq\epsilon\})<\infty$.
Then for $\lambda$ almost every $x\in\mathbb R^d$ one has $f(c_nx)\to0$.
\end{theorem}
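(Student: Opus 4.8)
The plan is to deduce Theorem~\ref{granpoz} from Theorem~\ref{szpoz} by applying the latter to the indicator functions of the superlevel sets of $f$.

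First I would fix $\epsilon>0$ and put $A_\epsilon=\{x\in\mathbb R^d:|f(x)|\geq\epsilon\}$. Since $f$ is measurable, $A_\epsilon$ is measurable, and by hypothesis $\lambda(A_\epsilon)<\infty$, so the function $g_\epsilon:=\mathbf 1_{A_\epsilon}$ is Lebesgue integrable. The assumption imposed on $(c_n)$ in Theorem~\ref{granpoz} is exactly the one required in Theorem~\ref{szpoz}, so applying Theorem~\ref{szpoz} to $g_\epsilon$ produces a set $Z_\epsilon\subseteq\mathbb R^d$ with $\lambda(Z_\epsilon)=0$ such that for every $x\notin Z_\epsilon$ one has $\sum_{n=1}^\infty\mathbf 1_{A_\epsilon}(c_nx)<\infty$. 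As the summands are non-negative integers, the convergence of this series forces $\mathbf 1_{A_\epsilon}(c_nx)=0$ for all sufficiently large $n$; equivalently, $|f(c_nx)|<\epsilon$ for all but finitely many $n$.

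Next I would set $Z=\bigcup_{k=1}^\infty Z_{1/k}$, which is still a null set. For $x\notin Z$ and any $\epsilon>0$, choose $k\in\mathbb N$ with $1/k<\epsilon$; applying the previous paragraph with $\epsilon$ replaced by $1/k$ gives $|f(c_nx)|<1/k<\epsilon$ for all large $n$. Hence $f(c_nx)\to0$ for every $x\notin Z$, which is the assertion of the theorem.

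There is no real obstacle here: the whole point is the observation that the hypothesis ``$\lambda(\{|f|\geq\epsilon\})<\infty$ for every $\epsilon>0$'' is precisely what makes each indicator $\mathbf 1_{A_\epsilon}$ globally integrable, so that Theorem~\ref{szpoz} becomes applicable even though $f$ itself need not be integrable; everything else is a routine $\epsilon$--$1/k$ exhaustion combined with the elementary fact that a convergent series of non-negative integers has only finitely many nonzero terms. The only points one should mention in passing are the measurability of $A_\epsilon$ (from measurability of $f$) and of $x\mapsto\mathbf 1_{A_\epsilon}(c_nx)$, which guarantee that all the sets and series above are legitimate.
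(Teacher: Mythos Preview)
Your proof is correct and follows essentially the same route as the paper: both apply Theorem~\ref{szpoz} to the indicator functions $\mathbf 1_{\{|f|\ge 1/k\}}$, obtain for each $k$ a null exceptional set on whose complement only finitely many $c_nx$ land in $\{|f|\ge 1/k\}$, and then take the countable union of these null sets. The only cosmetic difference is that you first phrase the argument for a general $\epsilon$ before specializing to $\epsilon=1/k$, whereas the paper works with $1/k$ from the start.
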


In the conclusion of the above theorem we cannot keep the stronger statement $\sum_{n=1}^\infty|f(c_nx)|<\infty$
from Theorem \ref{szpoz}. Indeed, if $f(x)=1/(1+\|x\|)$, then $\sum_{n=1}^\infty|f(nx)|=\infty$ for every $x$.

The next theorem shows that the assumption on the sequence $(c_n)$ in Theorem \ref{szpoz} cannot be weakened.

\begin{theorem}\label{szneg}
Let $d\in\mathbb N$ and let $(c_n)$ be a~sequence of positive numbers such that for every permutation $(c'_n)$ of $(c_n)$
the sequence $(\sqrt[d]n/c_n')$ is unbounded.
Then there exists a~continuous, nonnegative function $f:\mathbb R^d\to\mathbb R$ such that $\int|f(x)|dx<\infty$
and $\sum_{n=1}^\infty|f(c_nx)|=\infty$ for every $x\in\mathbb R^d$.
\end{theorem}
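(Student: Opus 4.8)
The plan is to turn the statement into an explicit construction, after two reductions. First I recast the hypothesis: ``$(\sqrt[d]n/c'_n)$ is unbounded for every permutation $(c'_n)$'' is equivalent to $\sup_{t>0}\#\{n:c_n\le t\}/t^d=\infty$. Indeed, if some permutation satisfies $c'_n\ge n^{1/d}/M$ then $\#\{n:c_n\le t\}\le(Mt)^d$ for all $t$; conversely, if $\#\{n:c_n\le t\}\le Ct^d$ for all $t$, the nondecreasing rearrangement $c^{\ast}_n$ of $(c_n)$ obeys $n\le C(c^{\ast}_n)^d$, i.e.\ $\sqrt[d]n/c^{\ast}_n\le C^{1/d}$. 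Second, if $(c_n)$ has a finite accumulation point $c^{\ast}\ge0$, then $f(x):=e^{-\|x\|}$ already works: along a subsequence $c_{n_k}\to c^{\ast}$ one gets $f(c_{n_k}x)\to e^{-c^{\ast}\|x\|}>0$ for every $x$, hence $\sum_n f(c_nx)=\infty$ (this also covers $x=0$). So I may assume $c_n\to\infty$; then $m:=\min_n c_n>0$ and every bounded interval contains only finitely many $c_n$.

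The combinatorial core is a geometric--series pigeonhole. By the reformulated hypothesis, for every $M$ there is $t>0$ with $\#\{n:c_n\le t\}\ge Mt^d$. Partition $(0,t]$ into the annuli $(te^{-i-1},te^{-i}]$, $i\ge0$; if each contained fewer than $M(1-e^{-d})(te^{-i})^d$ of the $c_n$, summing the geometric series would force $\#\{n:c_n\le t\}<Mt^d$, a contradiction. Hence for every threshold $T$ there is a radius $\rho$ with $\#\{n:\rho<c_n\le e\rho\}\ge T\rho^d$, and necessarily $\rho\ge m/e$ (the nonempty window $(\rho,e\rho]$ contains some $c_n\ge m$). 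I therefore select, for $j\ge2$, radii $\rho_j\ge m/e$ so that $(\rho_j,e\rho_j]$ contains at least $j^{d+1}\rho_j^{d}$ of the $c_n$.

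Now I build $f$. Let $\phi_0\ge0$ be a fixed continuous bump with $\phi_0(0)=1$ supported near the origin; it disposes of $x=0$, since $\sum_n f(0)=\infty$. For $j\ge2$ let $\psi_j$ be continuous, $0\le\psi_j\le1$, equal to $1$ on the annulus $\{\rho_j/j\le\|y\|\le ej\rho_j\}$ and vanishing outside $\{\rho_j/(ej)\le\|y\|\le e^2j\rho_j\}$, and put
\[
 f(y):=\phi_0(y)+\sum_{j\ge 2}\eta_j\,\psi_j(y),\qquad \eta_j:=\frac{1}{\rho_j^{d}\,j^{d+1}(\log j)^2}.
\]
Since $\rho_j\ge m/e$ we have $\sum_j\eta_j<\infty$, so the series converges uniformly and $f$ is continuous and nonnegative; the $j$-th summand contributes $O((ej\rho_j)^d\eta_j)=O(1/(j(\log j)^2))$ to $\int f$, so $f$ is integrable; and although the annuli reach inward toward $0$, for $\|y\|$ small only the summands with $j\ge m/(e^2\|y\|)$ survive, whose total is a tail of $\sum_j\eta_j$, so $f$ is continuous (indeed bounded) at $0$.

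Finally I verify $\sum_n f(c_nx)=\infty$ for $x\ne0$. Set $s=\|x\|$. If $1/j\le s\le j$ (true for all large $j$), then every $n$ with $\rho_j<c_n\le e\rho_j$ satisfies $\rho_j/j\le c_ns\le ej\rho_j$, so $\psi_j(c_nx)=1$ for all such $n$, and the $j$-th summand of $\sum_n f(c_nx)$ is at least $\eta_j\cdot j^{d+1}\rho_j^{d}=1/(\log j)^2$. Since $\sum_j 1/(\log j)^2=\infty$, we conclude $\sum_n f(c_nx)=\infty$. The delicate point---and the reason for the particular shape of the construction---is exactly this uniformity over all $x$ at once: it forces the windows to \emph{widen} (a prescribed dilation $s$ is absorbed only once $j\ge\max(s,1/s)$) and, above all, it forces windows containing $j^{d+1}\rho_j^{d}$ of the $c_n$, i.e.\ counts far larger than the factor $j^d=e^{d\log j}$ produced by the widths---something available only because the supremum in the hypothesis is genuinely $+\infty$, not merely attained to slowly growing orders. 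The remainder is routine verification, together with the remark that the assumption $c_n\to\infty$ is precisely what keeps $\rho_j\ge m/e$, hence keeps $f$ continuous at the origin.
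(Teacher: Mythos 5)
Your proof is correct. It shares the paper's overall skeleton --- dispose of the case where $(c_n)$ has a finite accumulation point by taking any strictly positive integrable $f$, then, assuming $c_n\to\infty$, locate multiplicative windows $(\rho,e\rho]$ in which the $c_n$ are anomalously dense and build $f$ as a weighted sum of radial pieces attached to those windows, tuning the weights so that $\int f$ converges while $\sum_n f(c_nx)$ diverges --- but the two key ingredients are genuinely different. First, the paper obtains the density statement from Lemma~\ref{lemat} (the negation of condition (iii), phrased via $l_k=\sum_{n\in A_k}c_n^{-d}$), whereas you rederive it from scratch by a geometric-series pigeonhole on the counting function $|\{n:c_n\le t\}|$; your route is self-contained and slightly more elementary, at the cost of re-proving part of the lemma. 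Second, and more substantially, the paper's building blocks are the dilates $c_m^{-d}g(x/c_m)$ of a single \emph{globally supported}, radially nonincreasing profile $g$: since $g>0$ everywhere and $c_n/c_m<2$ within a dyadic window, the one-line inequality $g(c_nx/c_m)\ge g(2x)$ yields divergence for every $x\ne0$ simultaneously. Your building blocks are compactly supported annular bumps, so to achieve the same uniformity you must widen the $j$-th annulus by a factor $j$ on each side so that any fixed $x\ne0$ is eventually captured; this is what forces the window counts $j^{d+1}\rho_j^d$ (one power of $j$ beyond the $j^d$ paid for the widening) and the $1/(\log j)^2$ bookkeeping, and it is available precisely because the hypothesis makes the normalized counts unbounded. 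The interchange of the sums over $n$ and $j$ at the end is legitimate by nonnegativity, so the possible overlap of the windows causes no double-counting problem. Net effect: the paper's verification of divergence is slicker; yours makes the role of the counting function and of the ``widening'' needed for uniformity in $x$ completely explicit.
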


The above theorem may be seen as the inverse of Theorem \ref{szpoz}.
The situation is much more delicate when we try to inverse Theorem \ref{granpoz}.
Consider the following example: Let $(c_n)$ satisfy the assumption of Theorem \ref{granpoz},
for simplicity set $c_n=n$.
Then for any integrable $f$ we have $f(nx)\to0$ for $\lambda$ almost every $x\in\mathbb R^d$.
Now, we define a~sequence $(d_n)$ such that it tends to infinity arbitrarily slowly,
yet $f(d_nx)\to0$ for $\lambda$ almost every $x\in\mathbb R^d$.
It suffices to take $(d_n)$ which is formed by repeating each term of the sequence $(c_n=n)$
finitely many times. Indeed, the convergence of $f(d_nx)$ to zero follows from $f(nx)\to0$.
On the other hand, $(d_n)$ may tend to infinity slowly enough to ensure that $(\sqrt[d]n/d_n')$
is unbounded for every permutation $(d'_n)$ of $(d_n)$.
All this shows that Theorem \ref{granpoz} cannot be fully inversed.
Instead, we show the following theorem:

\begin{theorem}\label{granneg}
Let $d\in\mathbb N$ and let $(c_n)$ be a~sequence of positive numbers such that for every permutation $(c'_n)$ of $(c_n)$
the sequence $(\sqrt[d]n/c_n')$ is unbounded.
Then there exist a~sequence $(b_n)$ of positive numbers and a~continuous, nonnegative, integrable function $f:\mathbb R^d\to\mathbb R$
such that $b_n/c_n\to1$ and $f(b_nx)\not\to0$ for every $x\in\mathbb R^d$.
\end{theorem}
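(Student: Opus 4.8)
I would first invoke the permutation‑invariance of the whole situation: a rearrangement of a sequence converging to $1$ still converges to $1$, and $\limsup$ of a bounded sequence is rearrangement‑invariant, so if $(b_n)$ and $f$ work for $(c_n)$ then $(b_{\sigma(n)})$ and $f$ work for $(c_{\sigma(n)})$. Hence I may assume $(c_n)$ is non‑decreasing. If $c_n\not\to\infty$ then $c_n\to c^\ast\in(0,\infty)$, and $b_n:=c_n$, $f(y):=e^{-\|y\|^{2}}$ already work, since $f(c_nx)\to e^{-(c^\ast)^2\|x\|^{2}}>0$ for every $x$. So I assume $c_n\uparrow\infty$; writing $N(t):=\#\{n:c_n\le t\}$, the hypothesis applied to this non‑decreasing rearrangement says exactly that $\sqrt[d]n/c_n$ is unbounded, equivalently $\limsup_{t\to\infty}N(t)/t^{d}=\infty$.

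Next I would extract ``over‑dense clusters''. Recursively pick radii $r_1<r_2<\cdots$ with $r_k\ge 2^{k}$, $N(r_k)\ge 2^{k}r_k^{d}$ and $N(r_{k-1})\le\tfrac12 2^{k}r_k^{d}$ (possible because $\{t:N(t)/t^{d}\ge 2^{k}\}$ is unbounded). Then $B_k:=\{N(r_{k-1})+1,\dots,N(r_k)\}$ is a block of consecutive integers with $c_n\in(r_{k-1},r_k]$ and $|B_k|\ge 2^{k-1}r_k^{d}$. A dyadic pigeonhole on $(0,r_k]$ yields a dyadic interval $(v_k,2v_k]$ with $v_k>r_{k-1}/2$ carrying $\gtrsim 2^{k}v_k^{d}$ of the values $c_n$ ($n\in B_k$) — otherwise a geometric‑series count over the dyadic scales $\le 2r_k$ would contradict the size of $B_k$; a further pigeonhole over the $k$ subintervals of $(v_k,2v_k]$ of length $v_k/k$ produces a window $[w_k,w_k+v_k/k]$ carrying a subset $B_k'\subseteq B_k$ with $|B_k'|\gtrsim 2^{k}v_k^{d}/k$. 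Note $w_k\to\infty$ and $\min B_k'\to\infty$.

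For the construction set $A_k:=\min(w_k,2^{k/(2d)})$ and $\delta_k:=1/k$; using $r_k\ge 2^{k}$ one gets $A_k\to\infty$, $A_k\le w_k$ and $w_k/A_k\to\infty$. Put $b_n:=c_n$ for $n\notin\bigcup_k B_k'$, and for $n\in B_k'$ reassign the values $c_n$ — all of which lie within a factor $1+\delta_k$ of $w_k$ — bijectively onto an arithmetic progression filling $[w_k(1-\delta_k/2),w_k(1+\delta_k/2)]$; then $b_n/c_n\in[1-2\delta_k,1+\delta_k]\to1$, and $\{b_n:n\in B_k'\}$ is an $\epsilon_k$‑net of that interval with $\epsilon_k\lesssim 2^{-k}v_k^{1-d}$. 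Define $f(y)=g(\|y\|)$, where
\[
g=\min\Bigl(1,\ \phi_0+\sum_{k\ge1}\ \sum_{j}\phi_{k,j}\Bigr),
\]
$\phi_0$ is a fixed bump with $\phi_0(0)=1$ and $\operatorname{supp}\phi_0\subseteq[0,1]$, and for each $k$ the $\phi_{k,j}$ are equal trapezoidal bumps of height $1$ with plateau width $\mu_k\asymp 2^{-k}v_k^{1-d}A_k$ whose plateau centers $\rho_{k,j}$ form a relatively $\tfrac{\delta_k}{8}$‑dense subset of $[\max(1,w_k/A_k),\,w_kA_k]$. Since $w_k/A_k\to\infty$, every point of $(0,\infty)$ lies in only finitely many of these intervals, so locally only finitely many $\phi_{k,j}$ are nonzero and $g$ is continuous; and, since there are $\lesssim k^{2}$ bumps at stage $k$, $\int f\lesssim 1+\sum_{k,j}\mu_k\,\rho_{k,j}^{\,d-1}\lesssim 1+\sum_k k^{2}A_k^{d}2^{-k}\lesssim 1+\sum_k k^{2}2^{-k/2}<\infty$. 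Thus $f$ is continuous, nonnegative, integrable, and $f(0)=1$.

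For the non‑vanishing, fix $x$ and set $r=\|x\|$; if $r=0$ then $f(b_nx)=g(0)=1$, so assume $r>0$. For all large $k$ one has $\max(1/w_k,1/A_k)\le r\le A_k$, hence $w_kr\in[\max(1,w_k/A_k),w_kA_k]$ and some plateau center $\rho_{k,j}$ lies within relative $\delta_k/8$ of $w_kr$; since for large $k$ both $\mu_k\le\tfrac{\delta_k}{2}w_kr$ and $\mu_k\ge 2\epsilon_kr$ hold, the plateau $[\rho_{k,j},\rho_{k,j}+\mu_k]$ sits inside $[w_kr(1-\delta_k/2),w_kr(1+\delta_k/2)]$ and therefore contains a net point $b_nr$ with $n\in B_k'$; then $f(b_nx)=g(b_nr)=1$, and letting $k\to\infty$ yields infinitely many such $n$, so $f(b_nx)\not\to0$. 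The step I expect to be hardest is precisely the tension this construction must resolve: a badly clustered $(c_n)$ (which the hypothesis allows) forces, under any ratio‑$(1+o(1))$ perturbation, each net $\{b_n:n\in B_k'\}$ to live in a multiplicatively tiny window, so the plateaus of $f$ must be extremely finely spaced there; making this compatible with $\int f<\infty$ and with continuity of $f$ is the delicate point, and it is what dictates letting $\delta_k\to0$ slowly (we are free to), using a two‑step pigeonhole that still places many indices in the tiny window, and forcing $r_k\ge 2^{k}$ so that the windows — hence the bumps — march off to infinity and cannot accumulate.
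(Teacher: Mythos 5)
Your construction is correct in its essentials and reaches the same goal by a genuinely different route. Both proofs run on the same engine: the hypothesis forces multiplicative windows that are over-populated by the $c_n$'s relative to their $d$-th power, so that after a ratio-$(1+o(1))$ redistribution of those $c_n$'s the images $\{b_n x\}$ sweep a set of $x$'s of definite size while the target set for $f$ has arbitrarily small measure. The paper packages this in Lemma \ref{lemat} and, crucially, re-runs the cluster extraction at bases $a_i\to 1$, redistributing each cluster onto a \emph{geometric} progression so that the preimages $[a^{K+l-1/|A_K|}/b_n,\,a^{K+l}/b_n]$ tile a fixed interval $[a^{l-1},a^l]$ exactly; the quantifier ``for every $x$'' is then obtained by cycling $(a_i,l_i)$ so that every $x>0$ falls in infinitely many such intervals. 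You instead work at the single dyadic scale, shrink the window by a secondary pigeonhole (width $v_k/k$), redistribute onto an \emph{arithmetic} progression, and obtain ``for every $x$'' by planting, at stage $k$, a multiplicatively $\delta_k/8$-dense family of $O(k^2)$ plateaus over an expanding range $[w_k/A_k, w_kA_k]$, with integrability saved by the plateau width $\mu_k\asymp 2^{-k}v_k^{1-d}A_k$ and the cap $A_k\le 2^{k/(2d)}$. Your version buys a more self-contained, quantitative argument and avoids the $d$-to-$1$ reduction via $c_n^d$; the paper's version buys a cleaner ``one target interval per stage'' picture, reusability of Lemma \ref{lemat}, and the stronger conclusion $\limsup_n f(b_nx)=\infty$ via the weights $2^i g_i$ (your $f$ only achieves $\limsup\ge 1$, which still suffices for the statement). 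Two small repairs you should make: (1) handle the case where $(c_n)$ has a bounded subsequence \emph{before} passing to a nondecreasing rearrangement, since such a rearrangement need not exist when $(c_n)$ is unbounded but not tending to infinity (the strictly positive $f$ with $b_n=c_n$ disposes of that case directly, exactly as you do, but the order of the reductions matters); (2) your constants in the final step do not quite close ($\rho_{k,j}+\mu_k$ can reach $w_kr(1+5\delta_k/8)$, outside the net's span $w_kr(1+\delta_k/2)$), which is fixed by taking $\mu_k\le\frac{\delta_k}{4}w_kr$ or by widening the net's interval to $[w_k(1-\delta_k),w_k(1+\delta_k)]$; neither affects the argument.
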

In fact we prove a~bit more: If $c_n\to\infty$, then additionally $\limsup_{n\to\infty}f(b_nx)=\infty$ for every $x\neq0$.

In Theorem \ref{granneg} we claim that if a~sequence $(c_n)$ does not satisfy the assumption of Theorem \ref{granpoz},
then even if it is not ``bad'' itself, it can be slightly modified to a~``bad'' sequence.
On the other hand, each sequence $(c_n)$ with $c_n\to\infty$ can be improved in the following sense:

\begin{theorem}\label{granpozb}
Let $d\in\mathbb N$ and let $(c_n)$ be a~sequence of positive numbers tending to infinity.
There exists a~sequence $(b_n)$ of positive numbers with $b_n/c_n\to1$ such that:
For any measurable $f:\mathbb R^d\to\mathbb R$ satisfying
$\forall_{\epsilon>0}$ $\lambda(\{x\in\mathbb R^d:|f(x)|\geq\epsilon\})<\infty$
one has $f(b_nx)\to0$ for $\lambda$ almost every $x\in\mathbb R^d$.
\end{theorem}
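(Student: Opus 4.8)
The plan is to \emph{round each multiplier $c_n$ up to the nearest element of a carefully chosen set of ``scales''} $T$ --- a set sparse enough that a Borel--Cantelli argument applies, yet, on a multiplicative scale, dense enough that the rounding is asymptotically invisible. The point is that for the weaker ``$f(b_nx)\to0$'' conclusion one never needs $(\sqrt[d]n/b_n')$ to be bounded; it is enough that $(b_n)$ take its values in such a sparse set.

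First I would fix an increasing sequence $(\tau_k)_{k\ge1}$ of positive reals with $\tau_k\to\infty$, $\tau_{k+1}/\tau_k\to1$, and $\sum_{k=1}^\infty\tau_k^{-d}<\infty$; one may simply take $\tau_k=k^{2/d}$, since then $\tau_{k+1}/\tau_k=(1+1/k)^{2/d}\to1$ and $\sum_k\tau_k^{-d}=\sum_kk^{-2}<\infty$. Put $T=\{\tau_k:k\ge1\}$ and define $b_n=\min\{\tau\in T:\tau\ge c_n\}$. Then I would record two elementary facts. (a) Since $c_n\to\infty$, for each fixed $k$ one has $\{n:b_n=\tau_k\}\subseteq\{n:c_n\le\tau_k\}$, a finite set, so each value in $T$ is attained by $(b_n)$ only finitely often. (b) For large $n$ we have $b_n=\tau_{k(n)}$ with $k(n)\ge2$ and $\tau_{k(n)-1}<c_n\le\tau_{k(n)}$, so $1\le b_n/c_n\le\tau_{k(n)}/\tau_{k(n)-1}$; since $c_n\to\infty$ forces $k(n)\to\infty$, this gives $b_n/c_n\to1$. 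Note that $(b_n)$ depends only on $(c_n)$, not on $f$, as required.

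Next I would reduce the measure-theoretic claim to Borel--Cantelli. It suffices to prove that for each $m\in\mathbb N$ the set $E_m=\{x:|f(b_nx)|\ge1/m\text{ for infinitely many }n\}$ is $\lambda$-null, since $\{x:f(b_nx)\not\to0\}=\bigcup_mE_m$, so that $\lambda(E_m)=0$ for all $m$ gives $f(b_nx)\to0$ for $\lambda$ almost every $x$. Fix $m$ and set $A=\{x:|f(x)|\ge1/m\}$, which has $\lambda(A)<\infty$ by hypothesis; writing $b_n^{-1}A=\{x:b_nx\in A\}$ (the dilate of $A$ by $1/b_n$, so $\lambda(b_n^{-1}A)=b_n^{-d}\lambda(A)$) we have $E_m=\limsup_n b_n^{-1}A$. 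The crucial step is the inclusion $\limsup_n b_n^{-1}A\subseteq\limsup_k\tau_k^{-1}A$: if $b_nx\in A$ for infinitely many $n$, then by (a) infinitely many distinct elements $\tau_k\in T$ occur among these $b_n$, and for each such $\tau_k$ we get $\tau_kx=b_nx\in A$, i.e.\ $x\in\tau_k^{-1}A$ for infinitely many $k$. Finally $\sum_k\lambda(\tau_k^{-1}A)=\lambda(A)\sum_k\tau_k^{-d}<\infty$, so $\lambda(\limsup_k\tau_k^{-1}A)=0$ by the Borel--Cantelli lemma, and hence $\lambda(E_m)=0$.

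The only genuinely essential point --- and, I expect, the conceptual heart of the matter --- is that a set $T$ can satisfy $\tau_{k+1}/\tau_k\to1$ \emph{and} $\sum_k\tau_k^{-d}<\infty$ simultaneously: the first condition is exactly what makes rounding $c_n$ into $T$ harmless for $b_n/c_n\to1$, while the second is exactly what Borel--Cantelli needs, and their compatibility is what makes the theorem true (whereas the sequence $(c_n)$ itself may fail the ``$(\sqrt[d]n/c_n')$ bounded'' hypothesis behind Theorems \ref{szpoz} and \ref{granpoz}). Everything else is routine: fact (a) uses only $c_n\to\infty$, fact (b) is a one-line estimate, and the key inclusion is a pigeonhole argument resting on (a).
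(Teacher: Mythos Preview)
Your proof is correct. The underlying idea is the same as the paper's --- round each $c_n$ to the nearest element of a fixed countable set $T$ in which each value is attained only finitely often, and then deduce $f(b_nx)\to0$ from the corresponding statement along $T$ --- but the execution differs. The paper simply takes $T=\mathbb N$, sets $b_n=\lceil c_n\rceil$, and invokes Theorem~\ref{granpoz} (the sequence $(k)_{k\in\mathbb N}$ trivially satisfies its hypothesis, and $\lceil c_n\rceil/c_n\to1$ since $c_n\to\infty$); the whole argument is three lines. You instead choose a sparser set $T=\{k^{2/d}\}$ so that $\sum_k\tau_k^{-d}<\infty$, which lets you bypass Theorems~\ref{szpoz}--\ref{granpoz} and Lemma~\ref{lemat} entirely and conclude by a direct Borel--Cantelli computation. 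What the paper's route buys is brevity, given the machinery already in place; what yours buys is a self-contained argument that makes the mechanism transparent and does not rely on the earlier results of the paper.
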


In \cite{Lesigne} Lesigne also investigated the rate of convergence of $(f(nx))$ to zero.
In particular, he showed that for any sequence $(a_n)$ with $0\leq a_n\to\infty$
there exists a~continuous, integrable function $f:\mathbb R\to\mathbb R$
such that $\limsup_{n\to\infty}a_nf(nx)=\infty$ for $\lambda$ almost every $x\in\mathbb R$.
Moreover, if we drop the continuity requirement (we only require integrability of $f$), then we may
obtain $\limsup_{n\to\infty}a_nf(nx)=\infty$ for every $x\in\mathbb R$.
Lesigne asked if we may have both: continuity of $f$ and $\limsup_{n\to\infty}a_nf(nx)=\infty$ for every $x$.
This question has been positively answered by G. Batten in \cite{Batten}. The original Batten's paper is accessible through arXiv,
but (to best our knowledge) has never been published. Here we present a~much shorter proof of Batten's result in $\mathbb R^d$,
based on completely different ideas.

\begin{theorem}\label{odpow}
Let $d\in\mathbb N$ and let a~sequence $(a_n)$ satisfy $0\leq a_n\to\infty$.
There exists a~continuous, nonnegative, integrable function $f:\mathbb R^d\to\mathbb R$
such that $\limsup_{n\to\infty}a_nf(\sqrt[d]nx)=\infty$ for every $x\in\mathbb R^d$.
\end{theorem}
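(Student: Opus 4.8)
The plan is to realize $f$ as a locally finite sum of continuous nonnegative ``spherical plateaus'' supported on annuli that are thin in the radial direction, but --- and this is the crucial device --- defined through the variable $\|y\|^d$ rather than $\|y\|$. This linearizes the volume, since $\lambda(\{y\in\mathbb R^d:\|y\|^d\le t\})=\omega_d t$ where $\omega_d=\lambda(B(0,1))$, and turns the problem into a one-dimensional counting statement. First I would fix $R_k=k$ and heights $H_k=2^{-k}R_k^{-d}$; then, using $a_n\to\infty$, pick indices $N_k$ with $a_n\ge k/H_k$ for all $n\ge N_k$; then choose radii $\rho_k$, increasing to infinity, with $\rho_k^d\ge N_kR_k^d$ and large enough that the annuli $A_k':=\{y:\rho_k^d\le\|y\|^d\le\rho_k^d+2R_k^d\}$ are pairwise disjoint and disjoint from $B(0,1)$. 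Let $h_k$ be continuous with $0\le h_k\le H_k$, supported in $A_k'$, and equal to $H_k$ on the smaller set $A_k:=\{y:\rho_k^d\le\|y\|^d\le\rho_k^d+R_k^d\}$ (for instance, $h_k$ a function of $\|y\|^d$ that is flat on $A_k$ and decreases linearly to $0$). Finally add one continuous bump $h_0$ supported in $B(0,1)$ with $h_0(0)=1$, and put $f:=\sum_{k\ge0}h_k$.

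Next I would verify the required properties. Since the supports are disjoint and escape to infinity, the sum is locally finite, so $f$ is continuous and nonnegative; and $\int f\le\lambda(B(0,1))+\sum_{k\ge1}H_k\lambda(A_k')=\omega_d+2\omega_d\sum_{k\ge1}2^{-k}<\infty$, because $\lambda(A_k')=\omega_d\big((\rho_k^d+2R_k^d)-\rho_k^d\big)=2\omega_dR_k^d$. For the main point, fix $x\ne0$ and any index $k$ with $R_k\ge\|x\|$. The relation $\sqrt[d]{n}\,x\in A_k$ is equivalent to $n\in[\rho_k^d/\|x\|^d,\ (\rho_k^d+R_k^d)/\|x\|^d)$, an interval of length $R_k^d/\|x\|^d\ge1$, hence it contains an integer $n_k$; moreover $n_k\ge\rho_k^d/\|x\|^d\ge\rho_k^d/R_k^d\ge N_k$. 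Consequently $f(\sqrt[d]{n_k}\,x)\ge h_k(\sqrt[d]{n_k}\,x)=H_k$ and $a_{n_k}\ge k/H_k$, so $a_{n_k}f(\sqrt[d]{n_k}\,x)\ge k$. Since $n_k\ge\rho_k^d/\|x\|^d\to\infty$ as $k\to\infty$, this yields $\limsup_{n\to\infty}a_nf(\sqrt[d]{n}\,x)=\infty$. For $x=0$ we simply have $a_nf(0)=a_nh_0(0)=a_n\to\infty$.

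The one genuinely delicate issue is the ordering of the choices ($R_k$, then $H_k$, then $N_k$ coming from $a_n\to\infty$, then $\rho_k$) together with the observation that a single plateau $h_k$ catches the point $\sqrt[d]{n}\,x$ for a suitable $n$ simultaneously for every $x$ in the whole ball $\{\|x\|\le R_k\}$, with $R_k\to\infty$. This uniformity over a ball of $x$'s --- rather than over a vanishingly thin shell, which is all that a naive Euclidean annulus of small width would give --- is exactly what the $\|y\|^d$-coordinate provides: there the width of $A_k$ becomes an $n$-interval of length $\ge1$ for all $\|x\|\le R_k$ at once. The remaining details --- building the continuous plateaus $h_k$, the recursive choice of $\rho_k$ ensuring disjointness, and the harmless bump $h_0$ at the origin --- are routine.
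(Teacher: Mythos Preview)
Your proof is correct and follows essentially the same strategy as the paper: both linearize the problem through the variable $\|y\|^d$ (the paper does this by setting $f(y)=\widetilde f(\|y\|^d)$ and building $\widetilde f$ on $[0,\infty)$ as $\widetilde f(x)=h(x)+\sum_{l\ge1}l^{-3}h(x/l-t_l)$), and in that variable both constructions are sums of plateaus whose widths grow so that for every $x$ and all large $k$ some integer $n_k$ lands on the $k$-th plateau while $a_{n_k}$ is already large enough to beat the height. The only cosmetic difference is that the paper's bumps are allowed to overlap (continuity coming from uniform convergence of the series), whereas you make the annuli disjoint and get continuity from local finiteness.
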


\section*{Proofs}

The following lemma plays a~very important role in the proofs of (almost) all theorems in this paper:

\begin{lemma}\label{lemat}
Let $d>0$ and $a>1$ be real numbers and let $(c_n)$ be a~sequence of positive numbers. The following conditions are equivalent:
\begin{itemize}
\item[(i)] There exists a~permutation $(c'_n)$ of $(c_n)$, such that the sequence $(\sqrt[d]n/c_n')$ is bounded.
\item[(i')] There exists a~(unique) nondecreasing sequence $(c'_n)$ being a~permutation of $(c_n)$
and for this permutation the sequence $(\sqrt[d]n/c_n')$ is bounded.
\item[(ii)] There exists $M>0$, such that $\forall_{t>0}\ \sum_{\{n:\ t\leq c_n<at\}}\frac1{c_n^d}\leq M$.
\item[(iii)] There exists $M'>0$, such that $\forall_{k\in\mathbb Z}\ \frac{|\{n:a^k\leq c_n<a^{k+1}\}|}{a^{kd}}\leq M'$
\end{itemize}
\end{lemma}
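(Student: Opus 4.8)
The plan is to prove $(\mathrm{i})\Leftrightarrow(\mathrm{i}')$ separately and then close the loop $(\mathrm{i})\Rightarrow(\mathrm{iii})\Rightarrow(\mathrm{ii})\Rightarrow(\mathrm{i})$; since $(\mathrm{i}')\Rightarrow(\mathrm{i})$ is trivial, the only content of the first part is $(\mathrm{i})\Rightarrow(\mathrm{i}')$. The single computation that powers everything is a translation between the quantity $\sqrt[d]n/c_n'$ and the counting function $N(B):=|\{m:c_m\le B\}|$. I would first record the elementary fact that, \emph{provided every set $\{m:c_m\le B\}$ is finite} (so that the nondecreasing rearrangement $(c_n')$ exists and is unique), one has $c_n'\le B$ if and only if $N(B)\ge n$; consequently there is a constant $C$ with $\sqrt[d]n/c_n'\le C$ for all $n$ if and only if there is a constant $C$ with $N(B)\le(CB)^d$ for all $B>0$ (in fact the same $C$ serves in both directions, by plugging $B=c_n'$ into one inequality and $n=N(B)$ into the other).

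For $(\mathrm{i})\Rightarrow(\mathrm{i}')$ and $(\mathrm{i})\Rightarrow(\mathrm{iii})$: assume some permutation $(c_n')$ satisfies $\sqrt[d]n/c_n'\le C$. Then $c_n'\le B$ forces $n\le(CB)^d$, so every $\{m:c_m\le B\}$ is finite (cardinalities of level sets are permutation invariant), the nondecreasing rearrangement exists, and $N(B)\le(CB)^d$; the translation above gives $(\mathrm{i}')$. For $(\mathrm{iii})$, just bound $|\{n:a^k\le c_n<a^{k+1}\}|\le N(a^{k+1})\le(Ca^{k+1})^d=(Ca)^d\,a^{kd}$, so $M'=(Ca)^d$ works.

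For $(\mathrm{iii})\Rightarrow(\mathrm{ii})$: given $t>0$ put $k=\lfloor\log_a t\rfloor$, so $a^k\le t<a^{k+1}$ and $[t,at)\subseteq[a^k,a^{k+2})$; on the index set in question $c_n\ge a^k$, hence
\[
\sum_{\{n:\,t\le c_n<at\}}\frac1{c_n^d}\ \le\ a^{-kd}\,\bigl|\{n:a^k\le c_n<a^{k+2}\}\bigr|\ \le\ a^{-kd}\bigl(M'a^{kd}+M'a^{(k+1)d}\bigr)=M'(1+a^d),
\]
so $(\mathrm{ii})$ holds with $M=M'(1+a^d)$. For $(\mathrm{ii})\Rightarrow(\mathrm{i})$: applying $(\mathrm{ii})$ with $t=a^k$ and using that each summand there exceeds $a^{-(k+1)d}$ gives $|\{n:a^k\le c_n<a^{k+1}\}|\le Ma^{(k+1)d}$; summing this over all $k\le K:=\lfloor\log_a B\rfloor$ (the exponents form a geometric series of ratio $a^{-d}<1$, which converges) yields $N(B)\le\frac{Ma^d}{1-a^{-d}}B^d$. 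In particular every level set is finite, the nondecreasing rearrangement exists, and the translation gives $(\mathrm{i}')$, a fortiori $(\mathrm{i})$.

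The only point requiring care — and the reason the hypothesis is phrased via an auxiliary permutation — is that $(c_n)$ is an arbitrary sequence of positive reals: one may not presume that $c_n\to\infty$ or that a monotone rearrangement exists, so in each of the three directions one must first deduce finiteness of the level sets $N(B)$ before the rearrangement language is legitimate. Under $(\mathrm{ii})$ or $(\mathrm{iii})$ this finiteness, together with the final polynomial bound $N(B)=O(B^d)$, is precisely what comes out of summing the block counts against the convergent geometric series $\sum_{j\ge0}a^{-jd}$; everything else is bookkeeping with the inequalities $<$ versus $\le$ and the choice of the integer $k$.
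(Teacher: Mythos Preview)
Your proof is correct and follows essentially the same route as the paper: the trivial implication $(\mathrm{i}')\Rightarrow(\mathrm{i})$ together with the cycle $(\mathrm{i})\Rightarrow(\mathrm{iii})\Rightarrow(\mathrm{ii})\Rightarrow(\mathrm{i}')$, with the counting function $N(B)$ (the paper writes this inline as $|\{n:c_n<t\}|$) mediating between the rearrangement bound and the block estimates. The only cosmetic differences are the constants---you get $M=M'(1+a^d)$ where the paper gets $M=2M'$ by using the sharper lower bound $c_n\ge a^{k-1}$ on each dyadic block separately---and in $(\mathrm{ii})\Rightarrow(\mathrm{i}')$ you first extract the block bound $|A_k|\le Ma^{(k+1)d}$ and sum, while the paper sums the intervals $[ta^{-k},ta^{1-k})$ directly; both are the same geometric series.
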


\begin{proof}
Clearly (i') implies (i). We will show (i)$\Rightarrow$(iii)$\Rightarrow$(ii)$\Rightarrow$(i')

(i)$\Rightarrow$(iii). Let $L>0$ satisfy $\sqrt[d]n/c_n'\leq L$ for every $n$. If we put $M'=(La)^d$, then for every $k\in\mathbb Z$ we have
\begin{equation*}
\begin{split}
&\frac{|\{n:a^k\leq c_n<a^{k+1}\}|}{a^{kd}}\leq\frac{|\{n:c_n<a^{k+1}\}|}{a^{kd}}=\frac{|\{n:c'_n<a^{k+1}\}|}{a^{kd}}\leq
\frac{|\{n:\sqrt[d]n<La^{k+1}\}|}{a^{kd}}=\\
\qquad&\frac{|\{n:n<(La^{k+1})^d\}|}{a^{kd}}<\frac{(La^{k+1})^d}{a^{kd}}=(La)^d=M'.
\end{split}
\end{equation*}

(iii)$\Rightarrow$(ii). We put $M=2M'$. Let $t>0$. We have $a^{k-1}\leq t<a^k$ for some $k\in\mathbb Z$ and then
\begin{equation*}
\begin{split}
\sum_{\{n:\ t\leq c_n<at\}}&\frac1{c_n^d}\leq\sum_{\{n:\ a^{k-1}\leq c_n<a^k\}}\frac1{c_n^d}+\sum_{\{n:\ a^k\leq c_n<a^{k+1}\}}\frac1{c_n^d}\leq\\
&\frac{|\{n:\ a^{k-1}\leq c_n<a^k\}|}{a^{(k-1)d}}+\frac{|\{n:\ a^k\leq c_n<a^{k+1}\}|}{a^{kd}}\leq M'+M'=M.
\end{split}
\end{equation*}

(ii)$\Rightarrow$(i').
For any $t>0$ we have
\begin{equation*}
\begin{split}
&|\{n:c_n<t\}|=\sum_{k=1}^\infty|\{n:ta^{-k}\leq c_n<ata^{-k}\}|\leq\sum_{k=1}^\infty\sum_{\{n:\ ta^{-k}\leq c_n<ta^{1-k}\}}\frac{(ta^{1-k})^d}{c_n^d}\leq\\
&\sum_{k=1}^\infty(ta^{1-k})^d\cdot M=t^d\cdot\frac{M}{1-1/a^d}.
\end{split}
\end{equation*}
In particular, for every $t>0$ the set $\{n:c_n<t\}$ is finite, hence there exists a~nondecreasing permutation $(c_n')$ of $(c_n)$.
For this permutation we have $|\{n:c_n'<t\}|=|\{n:c_n<t\}|\leq t^d\cdot\frac{M}{1-1/a^d}$.
Since $(c_n')$ is nondecreasing, for every $m\in\mathbb N$ we have:
$$m\leq\inf_{t>c_m'}|\{n:c_n'<t\}|\leq\inf_{t>c_m'}\left(t^d\cdot\frac{M}{1-1/a^d}\right)=c_m'^d\cdot\frac{M}{1-1/a^d},$$
hence $\sqrt[d]m/c_m'\leq\sqrt[d]{\frac{M}{1-1/a^d}}$.
\end{proof}

\begin{proof}[Proof of Theorem \ref{szpoz}]
In the first part of the proof we show that for $\lambda$ almost every $x\in\mathbb R^d$
satisfying $\frac12<\|x\|\leq 1$ we have $\sum_{n=1}^\infty|f(c_nx)|<\infty$.
We define $f_n:\mathbb R^d\to\mathbb R$ by the formula
$$f_n(x)=\frac1{c_n^d}\cdot|f(x)|\cdot\mathbf1_{c_n/2<\|x\|\leq c_n}.$$
Functions $f_n$ are nonnegative and $\sum_{n=1}^\infty f_n(0)=0$. For every $x\neq0$
we use Lemma \ref{lemat} ((i)$\Rightarrow$(ii) with $a=2$ and $t=\|x\|$) to obtain:
$$\sum_{n=1}^\infty f_n(x)=|f(x)|\cdot\sum_{\{n:\ \|x\|\leq c_n<2\|x\|\}}\frac1{c_n^d}\leq |f(x)|\cdot M.$$
It follows that the function series $\sum_{n=1}^\infty f_n(x)$ is convergent and
$\int\sum_{n=1}^\infty f_n(x)dx\leq M\cdot\int|f(x)|dx<\infty$. Hence
\begin{equation*}
\begin{split}
\sum_{n=1}^\infty&\int_{\{x:\frac12<\|x\|\leq1\}}|f(c_nx)|dx=
\sum_{n=1}^\infty\int|f(c_nx)|\cdot\mathbf1_{\frac12<\|x\|\leq1}dx=
\sum_{n=1}^\infty\int|f(x)|\cdot\mathbf1_{c_n/2<\|x\|\leq c_n}\cdot\frac1{c_n^d}dx=\\
&\sum_{n=1}^\infty\int f_n(x)dx=\int\sum_{n=1}^\infty f_n(x)dx<\infty.
\end{split}
\end{equation*}
Thus, the function series $\sum_{n=1}^\infty|f(c_nx)|$ is convergent $\lambda$ almost everywhere
on $\{x\in\mathbb R^d:\frac12<\|x\|\leq1\}$ and the first part of the proof is completed.

Now, for $k\in\mathbb Z$ we consider the function $g_k(x)=f(2^kx)$. Clearly $g_k$ is integrable,
hence, by the first part of the proof, for $\lambda$ almost every $y$ satisfying
$\frac12<\|y\|\leq1$ the series $\sum_{n=1}^\infty|f(c_n2^ky)|=\sum_{n=1}^\infty|g_k(c_ny)|$ converges.
Denoting $x=2^ky$ we obtain that for $\lambda$ almost every $x$ satisfying
$2^{k-1}<\|x\|\leq2^k$ we have $\sum_{n=1}^\infty|f(c_nx)|<\infty$.
This observation completes the proof, because
$\mathbb R^d=\{0\}\cup\bigcup_{k\in\mathbb Z}\{x\in\mathbb R^d:2^{k-1}<\|x\|\leq2^k\}$.
\end{proof}

\begin{proof}[Proof of Theorem \ref{granpoz}]
For $k=1,2,\dots$ we apply Theorem \ref{szpoz} for an~integrable function
$f_k(x)=\mathbf1_{|f(x)|\geq1/k}$. As a~result, we obtain a~set $A_k\subset\mathbb R^d$,
such that $\lambda(A_k)=0$ and for every $x\in\mathbb R^d\setminus A_k$ we have $f_k(c_nx)\to0$ when $n\to\infty$.
Clearly, $\lambda(\bigcup_{k=1}^\infty A_k)=0$.
The convergence $f_k(c_nx)\to0$ implies that the set $\{n:|f(c_nx)|\geq1/k\}$ is finite.
It follows that if $x\in\mathbb R^d\setminus\bigcup_{k=1}^\infty A_k$, then $\forall_{k\in\mathbb N}\ |\{n:|f(c_nx)|\geq1/k\}|<\infty$,
which means $f(c_nx)\to0$.
\end{proof}

\begin{proof}[Proof of Theorem \ref{szneg}]
If $c_n\not\to\infty$, then there exists $c\geq0$ and a~subsequence $(c_{n_i})$ such that $c_{n_i}\to c$.
In this case we can take any $f$ which is strictly positive, integrable and continuous, e.g. $f(x)=1/(1+\|x\|^{d+1})$.
Indeed, if $x\in\mathbb R^d$, then $f(c_{n_i}x)\to f(cx)>0$, hence $\sum_{n=1}^\infty|f(c_nx)|\geq\sum_{i=1}^\infty|f(c_{n_i}x)|=\infty$.
In the remaining part of the proof we assume $c_n\to\infty$.

For $k\in\mathbb Z$ let $A_k=\{n:2^k\leq c_n<2^{k+1}\}$ and $l_k=\sum_{n\in A_k}\frac1{c_n^d}$.
The assumption $c_n\to\infty$ implies that the sets $A_k$ are finite.
Moreover, the sets $A_k$ are pairwise disjoint and $\mathbb N=\bigcup_{k\in\mathbb Z}A_k$.
It follows, that for every $n\in\mathbb N$ there exists the unique $k(n)\in\mathbb Z$ such that $n\in A_{k(n)}$.
By Lemma \ref{lemat} ($\neg$(i)$\Rightarrow\neg$(iii) with $a=2$) and by the inequality $l_k\geq\frac{|A_k|}{2^{(k+1)d}}$
we obtain that the set $\{l_k:k\in\mathbb Z\}$ is unbounded. We take a~sequence $(k_i)$ such that $k_i$'s are pairwise different
and $l_{k_i}\geq i$ for every $i$. We define nonnegative numbers $(r_k)_{k\in\mathbb Z}$ by the formula
$$r_k=\begin{cases}
\frac1{i^2|A_{k_i}|}&\text{if }k=k_i,\\
0&\text{if }k\neq k_i\text{ for every }i.
\end{cases}$$
(note that $l_{k_i}>0$ implies $A_{k_i}\neq\emptyset$).
Then
$$\sum_{m=1}^\infty r_{k(m)}=\sum_{k\in\mathbb Z}\sum_{m\in A_k}r_k=\sum_{k\in\mathbb Z}r_k|A_k|=
\sum_{i=1}^\infty r_{k_i}|A_{k_i}|=\sum_{i=1}^\infty\frac1{i^2}<\infty$$
and $\sum_{k\in\mathbb Z}r_k|A_k|l_k=\sum_{i=1}^\infty r_{k_i}|A_{k_i}|l_{k_i}\geq\sum_{i=1}^\infty\frac1i=\infty$.

Let $g:\mathbb R^d\to\mathbb R$ be any bounded, strictly positive, integrable and continuous function, 
such that $g(x)$ is a~nonincreasing function of $\|x\|$ (e.g., $g(x)=1/(1+\|x\|^{d+1})$).
We define
$$f(x)=\sum_{m=1}^\infty\frac{r_{k(m)}}{c_m^d}\cdot g\left(\frac x{c_m}\right).$$
Note that the above function series converges uniformly, because $g$ is bounded, $c_m\to\infty$
and $\sum_{m=1}^\infty r_{k(m)}<\infty$. In particular $f$ is continuous. Clearly $f$ is positive.
Moreover,
$$\sum_{m=1}^\infty\int\frac{r_{k(m)}}{c_m^d}\cdot g\left(\frac x{c_m}\right)dx=
\sum_{m=1}^\infty\int r_{k(m)}\cdot g(x)dx=\int g(x)dx\cdot\sum_{m=1}^\infty r_{k(m)}<\infty,$$
hence $f$ is integrable.

If $x=0$, then $\sum_{n=1}^\infty|f(c_nx)|=\sum_{n=1}^\infty|f(0)|=\infty$, because $f(0)>0$. For $x\neq0$ we have
\begin{equation*}
\begin{split}
\sum_{n=1}^\infty&|f(c_nx)|=\sum_{k\in\mathbb Z}\sum_{n\in A_k}f(c_nx)\geq
\sum_{k\in\mathbb Z}\sum_{n\in A_k}\sum_{m\in A_k}\frac{r_{k(m)}}{c_m^d}\cdot g\left(\frac{c_n}{c_m}\cdot x\right)\geq\\
&\sum_{k\in\mathbb Z}r_k\sum_{n\in A_k}\sum_{m\in A_k}\frac1{c_m^d}\cdot g(2x)=
g(2x)\cdot\sum_{k\in\mathbb Z}r_k|A_k|l_k=\infty
\end{split}
\end{equation*}
(we used the following observation: if $m,n\in A_k$, then $\frac{c_n}{c_m}<2$).
\end{proof}

The proof of Theorem \ref{granneg} is presented at the end of the paper. It is the hardest proof and it uses
some ideas presented in the proof of Theorem \ref{odpow}. For this reasons leaving it for the end is a~good idea.

\begin{proof}[Proof of Theorem \ref{granpozb}]
Let $(b_n)=(\lceil c_n\rceil)$. Then all the terms of $(b_n)$ are in $\mathbb N$.
The assumption $c_n\to\infty$ assures that for every $k\in\mathbb N$ the set $\{n:b_n=k\}$ is finite.
By Theorem \ref{granpoz} we have $f(kx)\to0$ for $\lambda$ almost every $x\in\mathbb R^d$.
Thus $f(b_nx)\to0$ for $\lambda$ almost every $x\in\mathbb R^d$.
Moreover, $c_n\to\infty$ implies $\frac{b_n}{c_n}=\frac{\lceil c_n\rceil}{c_n}\to1$.
\end{proof}

\begin{proof}[Proof of Theorem \ref{odpow}]
It is enough to construct a~continuous, nonnegative, integrable function $\widetilde f:[0,\infty)\to\mathbb R$,
such that $\limsup_{n\to\infty}a_n\widetilde f(nx)=\infty$ for every $x\in[0,\infty)$.
Then we define $f:\mathbb R^d\to\mathbb R$ by $f(x)=\widetilde f(\|x\|^d)$.
Clearly, $f$ is continuous, nonnegative and
$\limsup_{n\to\infty}a_nf(\sqrt[d]nx)=\limsup_{n\to\infty}a_n\widetilde f(n\|x\|^d)=\infty$
for every $x\in\mathbb R^d$. Moreover,
$$\int f(x)dx=\int\widetilde f(\|x\|^d)dx=S_d\cdot\int_{r=0}^\infty\widetilde f(r^d)r^{d-1}dr=\frac{S_d}d\int_{y=0}^\infty\widetilde f(y)dy<\infty$$
(here $S_d$ is $d-1$-dimensional measure of the unit sphere in $\mathbb R^d$).

For $k\in\mathbb N$ let $t_k>0$ be such that  $n\geq t_k\Rightarrow a_n\geq k^4$ for every $n\in\mathbb N$.
Let $h:\mathbb R\to\mathbb R$ be any continuous, bounded, nonnegative, integrable function satisfying $h|_{[0,1]}\geq 1$.
We define $\widetilde f:[0,\infty)\to\mathbb R$ as follows:
$$\widetilde f(x)=h(x)+\sum_{l=1}^\infty\frac{h(\frac xl-t_l)}{l^3}.$$
Function $\widetilde f$ is nonnegative and continuous (the series converges uniformly). It is also integrable:
\begin{equation*}
\begin{split}
\int_0^\infty\widetilde f(x)dx=&\int_0^\infty h(x)dx+\sum_{l=1}^\infty\int_0^\infty\frac{h(\frac xl-t_l)}{l^3}dx=
\int_0^\infty h(x)dx+\sum_{l=1}^\infty\int_0^\infty\frac{h(x-t_l)}{l^2}dx\leq\\
&\int h(x)dx+\sum_{l=1}^\infty\int\frac{h(x)}{l^2}dx=\int h(x)dx\cdot\left(1+\sum_{l=1}^\infty\frac1{l^2}\right)<\infty.
\end{split}
\end{equation*}
If $x=0$, then $\limsup_{n\to\infty}a_n\widetilde f(nx)\geq\limsup_{n\to\infty}a_nh(nx)=\limsup_{n\to\infty}a_nh(0)\geq\limsup_{n\to\infty}a_n=\infty$.
Let $x>0$. Then for every $k\in\mathbb N$ satisfying $k>x$ we have $0<\frac xk<1$ and there exists
$n_k\in\mathbb N$ such that $n_k\cdot\frac xk\in[t_k,t_k+1]$, i.e., $\frac{n_kx}k-t_k\in[0,1]$. In particular, $n_k\geq t_k\cdot\frac{k}x>t_k$, hence $a_{n_k}\geq k^4$.
It follows that
$$a_{n_k}\widetilde f(n_kx)\geq a_{n_k}\cdot\frac1{k^3}\cdot h\left(\frac{n_kx}k-t_k\right)\geq k^4\cdot\frac1{k^3}\cdot1=k,$$
thus $\limsup_{n\to\infty}a_n\widetilde f(nx)\geq\limsup_{k\to\infty}a_{n_k}\widetilde f(n_kx)\geq\limsup_{k\to\infty}k=\infty$.
\end{proof}

The following technical lemma is helpful to perform an~inductive construction in the proof of Theorem~\ref{granneg}.

\begin{lemma}\label{lematgranneg}
Let $(c_n)$ be a~sequence of positive numbers such that $c_n\to\infty$ and for every permutation $(c'_n)$ of $(c_n)$
the sequence $(n/c_n')$ is unbounded.
Then for every $a>1$, $\epsilon>0$, $S>0$, $l\in\mathbb Z$ and $M\in\mathbb N\cup\{0\}$ there exist
$T>S$, $\mathbb N\ni N>M$, $b_{M+1},b_{M+2},\dots,b_N>0$ and a~continuous, integrable, nonnegative
function $g:[0,\infty)\to\mathbb R$ satisfying $\frac1a\leq\frac{b_n}{c_n}\leq a$ for $n=M+1,M+2,\dots,N$,
$\int_0^\infty g(x)dx<\epsilon$, $g|_{[0,\infty)\setminus[S,T]}=0$
and $\forall_{x\in[a^{l-1},a^l]}$ $\max_{M<n\leq N}g(b_nx)\geq1$.
\end{lemma}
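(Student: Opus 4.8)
The plan is to extract a ``bad'' block of indices from the hypothesis and to build a single trapezoidal bump adapted to it. Applying Lemma~\ref{lemat} with $d=1$ (so $\sqrt[d]n=n$), the assumption that $(n/c_n')$ is unbounded for every permutation $(c_n')$ is exactly the negation of~(i), hence, by the lemma, the negation of~(iii): for every $M'>0$ there is $k\in\mathbb Z$ with $|A_k|>M'a^k$, where $A_k:=\{n:a^k\le c_n<a^{k+1}\}$. First I would strengthen this to: for every $M'>0$ and every $K_0\in\mathbb Z$ there is $k\ge K_0$ with $|A_k|\ge M'a^k$. Indeed, $c_n\to\infty$ makes $\{n:c_n<a^{K_0}\}=\bigcup_{k<K_0}A_k$ a finite set, so only finitely many $A_k$ with $k<K_0$ are non-empty and $\sup_{k<K_0}|A_k|/a^k<\infty$; since the supremum over all $k\in\mathbb Z$ is infinite, it must already be infinite over $k\ge K_0$. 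Taking $K_0$ large then has three side benefits: every $n\in A_k$ automatically satisfies $n>M$ (an index $n\le M$ has $c_n\le\max\{c_1,\dots,c_M\}<a^{K_0}\le a^k$), the bump constructed below is supported to the right of $S$, and a smallness requirement on the parameter $\eta$ below is met.

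The function $g$ will be the trapezoid equal to $1$ on $E:=[w,w(1+\eta)]$, decreasing linearly to $0$ on each of the two flanking intervals of length $\eta w$, and $0$ elsewhere, where $w:=a^{k+l}$, $T:=w(1+2\eta)$, and $\eta:=\epsilon/(6a^{k+l})$. Then $\int_0^\infty g\le 3\eta w=\epsilon/2<\epsilon$ no matter what $k$ is, and provided $K_0$ is large enough that $a^{K_0+l}>S+\epsilon/6$ one gets $\eta<1$ and support $[w(1-\eta),w(1+2\eta)]\subseteq[S,T]$ with $S<T$. The decisive feature of the choice $w=a^{k+l}$ is that for any target position $\beta\in[a^{l-1},a^l]$ the number $w/\beta$ lies in $[a^k,a^{k+1}]$, so $1/a\le (w/\beta)/c_n\le a$ for \emph{every} $n\in A_k$ (recall $c_n\in[a^k,a^{k+1})$). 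Thus every index of the block is compatible with every target position in $[a^{l-1},a^l]$, which eliminates all matching issues.

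Next I would lay a multiplicative $\eta$-net across $[a^{l-1},a^l]$: put $\beta_i:=a^{l-1}(1+\eta)^i$ and let $R_0$ be the largest $i$ with $\beta_i\le a^l$. Since $\beta_{R_0}\le a^l$ forces $(1+\eta)^{R_0}\le a$, we have $R_0+1\le 1+\ln a/\ln(1+\eta)\le 1+2\ln a/\eta=1+12a^{k+l}\ln a/\epsilon$. Hence, invoking the first step with $M':=P_0:=12a^l\ln a/\epsilon+1$ and with $K_0\ge 0$, we obtain $|A_k|\ge P_0a^k\ge R_0+1$ and may choose distinct indices $n_0,\dots,n_{R_0}\in A_k$. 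Set $b_{n_i}:=w/\beta_i$ and $b_n:=c_n$ for the remaining $n\in\{M+1,\dots,N\}$, where $N:=\max A_k$ (which exists, is $>M$, and lies in $\mathbb N$); the inequalities $1/a\le b_n/c_n\le a$ hold for all these $n$, by the preceding paragraph together with $a^{l-1}\le\beta_i\le a^l$. Finally, for $x\in[\beta_j,\beta_{j+1}]$ one has $b_{n_j}x=(w/\beta_j)x\in[w,w(1+\eta)]=E$, so $g(b_{n_j}x)=1$; since $\bigcup_{j=0}^{R_0}[\beta_j,\beta_{j+1}]=[a^{l-1},\beta_{R_0+1}]$ and $\beta_{R_0+1}>a^l$ by the maximality of $R_0$, this gives $\max_{M<n\le N}g(b_nx)\ge 1$ for every $x\in[a^{l-1},a^l]$.

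The main obstacle is the tension between the three size constraints: $\int_0^\infty g<\epsilon$ wants a thin bump (small $\eta$), a thin bump demands a fine net (large $R_0$), and a fine net needs many usable indices in a \emph{single} block $A_k$. It is resolved by the scaling above: tying $\eta$ to $\epsilon/(6a^{k+l})$ makes $R_0$ grow like $a^{k+l}$, whereas $|A_k|$ can be made to grow like $P_0a^k$ with $P_0$ a constant depending only on $a,l,\epsilon$, so the factors $a^k$ cancel and one fixed $P_0$ works for all large $k$. Two minor points still need care: the right endpoint of the net — the last sub-interval $[\beta_{R_0},\beta_{R_0+1}]$ must protrude past $a^l$ while $\beta_{R_0}$ itself stays $\le a^l$, which is exactly what keeps $b_{n_{R_0}}/c_{n_{R_0}}$ inside $[1/a,a]$ — and the case of a very negative $l$, where $\eta$ is kept $<1$ (and the bump kept in $[0,\infty)$) by the condition $a^{K_0+l}>S+\epsilon/6$.
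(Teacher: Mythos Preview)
Your proof is correct and follows essentially the same strategy as the paper: use Lemma~\ref{lemat} to locate a block $A_k=\{n:a^k\le c_n<a^{k+1}\}$ with $|A_k|/a^k$ large and $k$ large, place a single thin bump near $a^{k+l}$, and redefine the $b_n$'s for indices in $A_k$ to form a multiplicative net so that the products $b_nx$ sweep the plateau as $x$ runs over $[a^{l-1},a^l]$. The only difference is cosmetic---the paper takes the net points to be $a^{K+j/|A_K|}$ (using all of $A_K$) and controls the integral via the limit $a^{K+l}(1-a^{-1/|A_K|})\to0$, whereas you fix the plateau width $\eta w=\epsilon/6$ first and then verify that the number $R_0+1$ of net points needed is $O(a^k)$, hence $\le|A_k|$.
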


\begin{proof}
For $k\in\mathbb Z$ let $A_k=\{n:a^k\leq c_n<a^{k+1}\}$. According to Lemma \ref{lemat}
($\neg$(i)$\Rightarrow\neg$(iii)) there exists a~sequence $(k_i)$ such that $\frac{|A_{k_i}|}{a^{k_i}}\to\infty$.
We can assume that $A_{k_i}\neq\emptyset$ and $k_i>1-l+\log_aS$ and $k_i>\max\{\log_ac_n:n\leq M\}$ for every $i$.
The last inequality ensures that for every $n$ if $n\in A_{k_i}$, then $n>M$.
We consider a~term $a^{k_i+l}(1-a^{-1/|A_{k_i}|})$ and its limit when $i\to\infty$:
$$\lim_{i\to\infty}a^{k_i+l}(1-a^{-1/|A_{k_i}|})=\lim_{i\to\infty}a^l\cdot\frac{a^{k_i}}{|A_{k_i}|}\cdot\frac{1-a^{-1/|A_{k_i}|}}{0-(-1/|A_{k_i}|)}
=a^l\cdot0\cdot\log_e a=0.$$
It follows that we can choose $K\in\{k_i:i\in\mathbb N\}$ satisfying $a^{K+l}(1-a^{-1/|A_K|})<\epsilon$.
We put $N=\max A_K$. Then $A_K\subset\{M+1,M+2,\dots,N\}$.

We define $b_{M+1},b_{M+2},\dots,b_N$:
If $n\in\{M+1,\dots,N\}\setminus A_K$, then we put $b_n=c_n$. The remaining $b_n$'s (with $n\in A_K$)
are chosen in any way satisfying $\{b_n:n\in A_K\}=\{a^{K+\frac j{|A_K|}}:j=0,1,\dots,|A_K|-1\}$.
If $n\in\{M+1,\dots,N\}\setminus A_K$, then $\frac1a\leq1=\frac{b_n}{c_n}\leq a$.
If $n\in A_K$, then both $b_n$ and $c_n$ are in $[a^K,a^{K+1})$, hence $\frac1a\leq\frac{b_n}{c_n}\leq a$.

We choose any $T>a^{K+l}$. The inequality $K>1-l+\log_aS$ implies $a^{K+l-\frac1{|A_K|}}\geq a^{K+l-1}>S$.
Hence $[a^{K+l-\frac1{|A_K|}},a^{K+l}]\subset(S,T)$. We also have
$\int_0^\infty\mathbf1_{[a^{K+l-\frac1{|A_K|}},a^{K+l}]}dx=\lambda([a^{K+l-\frac1{|A_K|}},a^{K+l}])=a^{K+l}(1-a^{-1/|A_K|})<\epsilon$.
All these observations show that there exists a~nonnegative, continuous function $g:[0,\infty)\to\mathbb R$ such that
$g$ equals $0$ outside $[S,T]$, $g$ equals $1$ on $[a^{K+l-\frac1{|A_K|}},a^{K+l}]$ and $\int_0^\infty g(x)dx<\epsilon$.

It remains to check that $\forall_{x\in[a^{l-1},a^l]}$ $\max_{M<n\leq N}g(b_nx)\geq1$.
We have
$$[a^{l-1},a^l]=\bigcup_{j=0}^{|A_K|-1}\left[a^{l-\frac{j+1}{|A_K|}},a^{l-\frac j{|A_K|}}\right]
=\bigcup_{n\in A_K}\left[\frac{a^{K+l-\frac1{|A_K|}}}{b_n},\frac{a^{K+l}}{b_n}\right].$$
It follows, that if $x\in[a^{l-1},a^l]$, then $b_{n_0}x\in[a^{K+l-\frac1{|A_K|}},a^{K+l}]$ for some $n_0\in A_K$.
Consequently, $\max_{M<n\leq N}g(b_nx)\geq g(b_{n_0}x)=1$.
\end{proof}

\begin{proof}[Proof of Theorem \ref{granneg}]
If $c_n\not\to\infty$, then there exists $c\geq0$ and a~subsequence $(c_{n_i})$ such that $c_{n_i}\to c$.
In this case we can take any $f$ which is strictly positive, integrable and continuous, e.g. $f(x)=1/(1+\|x\|^{d+1})$
and $(b_n)=(c_n)$. Indeed, if $x\in\mathbb R^d$, then $f(c_{n_i}x)\to f(cx)>0$, hence $f(b_nx)=f(c_nx)\not\to0$.
In the remaining part of the proof we assume $c_n\to\infty$.

Let $(\widetilde c_n)=(c_n^d)$. Then $\widetilde c_n\to\infty$
and for every permutation $(\widetilde c'_n)$ of $(\widetilde c_n)$
the sequence $(n/\widetilde c_n')$ is unbounded.
To finish the proof it is enough to construct a~continuous, nonnegative, integrable function $\widetilde f:[0,\infty)\to\mathbb R$
and a~sequence $(\widetilde b_n)$ such that $\frac{\widetilde b_n}{\widetilde c_n}\to1$
and $\widetilde f(\widetilde b_nx)\not\to0$ for every $x\in[0,\infty)$.
Then the function $f:\mathbb R^d\to\mathbb R$ defined by $f(x)=\widetilde f(\|x\|^d)$
is continuous, nonnegative and integrable (see the beginning of the proof of Theorem \ref{odpow}).
For $(b_n)=(\sqrt[d]{\widetilde b_n})$ we have $\frac{b_n}{c_n}=\sqrt[d]{\frac{\widetilde b_n}{\widetilde c_n}}\to1$
and $f(b_nx)=\widetilde f(\widetilde b_n\|x\|^d)\not\to0$ for every $x\in\mathbb R^d$.

We fix two sequences: $(a_i)$ and $(l_i)$ such that $a_i>1$ and $l_i\in\mathbb Z$ for every $i\in\mathbb N$,
$a_i\to 1$ and every $x>0$ is an~element of infinitely many of the intervals $[a_i^{l_i-1},a_i^{l_i}]$.
One may put for example
\begin{equation*}
\begin{split}
a_i=&1+\frac1k\\
l_i=&i-\frac{(k+1)^3+k^3-1}2
\end{split}
\qquad\text{for } k^3\leq i<(k+1)^3,\quad k\in\mathbb N
\end{equation*}
(it is easy to compute that for such $(a_i)$ and $(l_i)$ one has $\bigcup_{i=k^3}^{(k+1)^3-1}[a_i^{l_i-1},a_i^{l_i}]\supset[2^{-k},2^k]$).

We construct the function $\widetilde f$ and the sequence $(\widetilde b_n)$ piecewise, by induction. In each step we apply Lemma \ref{lematgranneg}
to obtain the next part of the function $\widetilde f$ and the next part of the sequence $(\widetilde b_n)$.
More precisely, in the $i$-th step of the induction we define $\widetilde f$ on an~interval $[S_i,T_i]$ and $\widetilde b_n$'s with $n=M_i+1,\dots,N_i$.
At the beginning no $\widetilde b_n$'s are defined, so we put $M_1=0$. We choose $S_1$ arbitrarily, e.g. $S_1=1$.
Then we apply Lemma \ref{lematgranneg} with $a=a_1$, $l=l_1$, $M=M_1$, $S=S_1$ and $\epsilon=1/4$.
As a~result we obtain $N_1=N$, $T_1=T$, function $g_1=g:[0,\infty)\to\mathbb R$ such that $g_1$ is zero outside $[S_1,T_1]$
and $\widetilde b_n$'s for $n=M_1+1,\dots,N_1$. We repeat this procedure infinitely many times. In the $i$-th step
we apply Lemma \ref{lematgranneg} with $a=a_i$, $l=l_i$, $M=M_i=N_{i-1}$, $S=S_i=T_{i-1}+1$ and $\epsilon=1/4^i$.
As a~result we obtain $N_i=N$, $T_i=T$, function $g_i=g:[0,\infty)\to\mathbb R$ such that $g_i$ is zero outside $[S_i,T_i]$
and $\widetilde b_n$'s for $n=M_i+1,\dots,N_i$.

The whole sequence $(\widetilde b_n)$ satisfies $\frac1{a_i}\leq\frac{\widetilde b_n}{\widetilde c_n}\leq a_i$
for $M_i<n\leq N_i$, which (together with $a_i\to1$) implies $\frac{\widetilde b_n}{\widetilde c_n}\to1$.
Let
$$\widetilde f(x)=h(x)+\sum_{i=1}^\infty 2^ig_i(x),$$
where $h:[0,\infty)\to\mathbb R$ is an~arbitrary continuous, positive and integrable function.
Function $\widetilde f$ is nonnegative, continuous (the series converges almost uniformly)
and integrable ($\int_0^\infty\widetilde f(x)dx<\int_0^\infty h(x)dx+\sum_{i=1}^\infty 2^i/4^i<\infty$).

Finally, let $x\in[0,\infty)$.
If $x=0$, then $\widetilde f(\widetilde b_nx)\geq h(0)>0$, hence $\widetilde f(\widetilde b_nx)\not\to0$.
If $x>0$, then there exists an~increasing sequence $(i_j)$ satisfying $x\in[a_{i_j}^{l_{i_j}-1},a_{i_j}^{l_{i_j}}]$
and we have
$$\limsup_{n\to\infty}\widetilde f(\widetilde b_nx)=
\limsup_{i\to\infty}\max_{M_i<n\leq N_i}\widetilde f(\widetilde b_nx)\geq
\limsup_{j\to\infty}\max_{M_{i_j}<n\leq N_{i_j}}2^{i_j}g_{i_j}(\widetilde b_nx)\geq
\limsup_{j\to\infty}2^{i_j}=\infty.$$
\end{proof}


\begin{thebibliography}{12}
\bibitem{Batten} G. W. Batten, \emph{Construction of Continuous, Integrable Functions with Extreme Behavior at Infinity},
 arXiv:1010.3354 [math.CA]

\bibitem{Lesigne} E. Lesigne, \emph{On the behavior at infinity of an~integrable function},
 Amer. Math. Monthly 117 (2010), no. 2, 175-–181.

\bibitem{Mihai} M. Mihai, \emph{A~remark on the behavior of integrable functions at infinity},
 An. Univ. Craiova Ser. Mat. Inform. 38 (2011), no. 4, 100--101.

\bibitem{NicPopi} C. Niculescu, F. Popovici, \emph{A~note on the behavior of integrable functions at infinity},
 J. Math. Anal. Appl. 381 (2011), no. 2, 742--747.

\bibitem{NicPopii} C. Niculescu, F. Popovici, \emph{The behavior at infinity of an~integrable function},
 Expo. Math. 30 (2012), no. 3, 277--282.

\bibitem{NicPopiii} C. Niculescu, F. Popovici, \emph{The asymptotic behavior of integrable functions},
 Real Anal. Exchange 38 (2012/13), no. 1, 157--167.
\end{thebibliography}
\end{document}